\newcommand{\norm}[1]{\left\Vert #1 \right\Vert}
\newcommand{\ls}{\lesssim}
\newcommand{\R}{\mathbb{R}}
\newcommand{\angles}[1]{\langle #1 \rangle}
\newtheorem*{PWtheorem}{Paley-Wiener Theorem}
\newtheorem{theorem}{Theorem}
\newtheorem{lemma}{Lemma}
\newtheorem{corollary}{Corollary}
\theoremstyle{definition}
\theoremstyle{remark}
\newtheorem{remark}{Remark}
\numberwithin{equation}{section}
\begin{document}

%//////////////////////////////////////////////////////////////////////////////////////////////////
\title[gKdV-3: spatial analyticity]{ On the radius of spatial analyticity for the quartic generalized KdV equation}

\author{Sigmund Selberg}
\email{sigmund.selberg@uib.no}
\author{Achenef Tesfahun}
\email{Achenef.Temesgen@uib.no}
 
\address{Department of Mathematics\\
University of Bergen\\
PO Box 7803\\
5020 Bergen\\ Norway}

%----------classification, keywords, date

\subjclass{35Q40; 35L70; 81V10}

\keywords{Quartic generalized Korteweg-de Vries;  Radius of analyticity of solution; Lower bound;  Gevrey spaces}

\date{July 26, 2016}
%----------additions

%//////////////////////////////////////////////////////////////////////////////////////////////////

\begin{abstract}
Lower bound on the rate of decrease in time of the  uniform radius of spatial analyticity of solutions to the quartic generalized KdV equation is derived,
which improves an earlier result by Bona, Gruji\'c and Kalisch. 
\end{abstract}

%//////////////////////////////////////////////////////////////////////////////////////////////////
\maketitle
%//////////////////////////////////////////////////////////////////////////////////////////////////

\section{Introduction}
Consider the Cauchy problem for the quartic generalized Korteweg-de Vries (KdV) equation
\begin{equation}\label{gkdv}
\begin{cases}
u_t + u_{xxx} + (u^4)_x  = 0, \qquad t,x \in \mathbb R,\\
u(x,0) = u_0(x),
\end{cases}
\end{equation}
where the unknown $u(x,t)$ and the datum $u_0(x)$ are real-valued.

 In \cite{g05}  Gr\"{u}nrock proved that the Cauchy problem \eqref{gkdv} is locally well-posed  for data  $u_0\in H^s(\R)$ with $s>-1/6$ and globally well-posed for data
  $u_0\in H^s(\R)$ with $s\ge 0$. Later, Tao \cite{t07} proved that \eqref{gkdv} is globally well-posed for data in the critical space $ \dot H^{-\frac16}(\R)$ with small norm. For an earlier study of well-posedness for \eqref{gkdv} we refer to \cite{kpv93}.
  
 In the present paper we shall study spatial analyticity of the solutions to the above Cauchy problem motivated by earlier works on this issue 
 for generalized KDV by Bona, Gruji\'c and Kalisch \cite{bgk05} and a recent one for KDV by Selberg and Da Silva \cite{sd16}.  In particular, we consider a real-analytic initial data $u_0$ with 
 uniform radius of analyticity $\sigma_0>0$, so there is a holomorphic extension to a complex strip 
 $$ S_{\sigma_0} =\{x + iy : |y| < \sigma_0 \}.$$ The question is then whether this property persists for all later times $t$, but with a possibly smaller and shrinking radius of analyticity $\sigma(t) > 0$, i.e. is the solution $u(t,x)$ of \eqref{gkdv} analytic in $S_{\sigma(t)}$ for all $t$? 
   For short times it was shown by Gruji\'c and Kalisch in \cite{gk02} that the radius of analyticity remains at least as large as the initial radius, i.e. one can take $\sigma(t)=\sigma_0$. 
For large times on the other hand it was shown by Bona, Gruji\'c and Kalisch in \cite[see Corollary 4]{bgk05} that $\sigma(t)$ can decay no faster than \footnote{ We use the notation $a\pm=a\pm \varepsilon$ for sufficiently small $\varepsilon>0$. } 
$t^{-164-}$ as $t \to \infty$.  In this paper we use the idea introduced in \cite{st15} (see also \cite{sd16}) to improve this result significantly showing $\sigma(t)$ can decay no faster than 
$t^{-2}$ as $t \to \infty$.
 For studies on related issues for nonlinear partial differential equations see for instance \cite{am84, cdn14,  hhp11,  hp12,  j86, km86, p12, l12}.

 The Gevrey space, denoted $G^{\sigma,s} = G^{\sigma,s}(\mathbb R)$, is a suitable space to study analyticity of solution. This space is defined by the norm
\[
\| f \|_{G^{\sigma, s}(\mathbb R)} = \left\| e^{\sigma | \xi |}\angles{ \xi }^s \hat{f}(\xi) \right\|_{L^2_\xi (\mathbb{R})},
\]
where $\hat f$ denotes the Fourier transform given by
$$
\hat f( \xi)=\int_{\R} e^{ -i x\xi} f(x) dx
$$
and $\angles{\xi}=\sqrt{1+|\xi|^2}$. For $\sigma=0$ the Gevrey-space coincides with the Sobolev space $H^s$.  We shall  write $G^{\sigma}=G^{\sigma,0}$.
One of the key properties of the Gevrey space is that every function in $G^{\sigma,s}$ with $\sigma>0$ has an analytic extension to the strip $S_\sigma$. This property 
is contained in the following.
\begin{PWtheorem}
Let $\sigma > 0$, $s \in \mathbb R$.  Then the following are equivalent:
\begin{enumerate}
\item $f \in G^{\sigma, s}$.
\item $f$ is the restriction to the real line of a function $F$ which is holomorphic in the strip
\[
S_\sigma =  \{ x + i y :\ x,y \in \mathbb{R}, | y | < \sigma\}
\]
and satisfies
\[
\sup_{| y | < \sigma} \| F( x + i y ) \|_{H^s_x} < \infty.
\]
\end{enumerate}
\end{PWtheorem}
The proof given for $s = 0$ in \cite[p. 209]{k76} applies also for $s\in \R$ with some obvious
modifications.

Observe that the Gevrey spaces satisfy the following embedding property:
\begin{align}
\label{Gembedding}
  G^{\sigma,s} &\subset G^{\sigma',s'} \quad \text{for all $0 \le \sigma' < \sigma$ and $s,s' \in \mathbb R$}.
\end{align}
In particular, setting $\sigma'=0$, we have the embedding $ G^{\sigma,s} \subset H^{s'}$ for all $0  < \sigma$ and $s, s'\in \mathbb R $.
As a consequence of this property and the existing well-posedness theory in $H^s$ we conclude that the Cauchy problem \eqref{gkdv} has a unique, 
smooth solution for all time, given initial data 
$u_0 \in G^{\sigma_0,s}$ for all $\sigma_0>0$ and $s\in \R$.
Our main result gives an algebraic lower bound on the radius of analyticity $\sigma(t)$ of the solution as the time $t$ tends to infinity.
\begin{theorem}\label{thm-gwp}
Assume $u_0 \in G^{\sigma_0,s}$ for some $\sigma_0 > 0$ and $s\in \R$.  Let $u$ be the global $C^\infty$ solution of \eqref{gkdv}. Then $u$ satisfies
\[
  u(t) \in G^{\sigma(t), s} \quad \text{for all } \ t\in \R,
\]
with the radius of analyticity $\sigma(t)$ satisfying an asymptotic lower bound
\[
\sigma(t) \ge ct^{-2}\quad \text {as} \ |t|\rightarrow \infty,
\]
where $c > 0$ is a constant depending on  $\|u_0\|_{G^{\sigma_0,s}}$, $\sigma_0$ and $s$.
\end{theorem}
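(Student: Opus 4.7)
The plan is to adapt the approximate conservation law strategy of \cite{st15, sd16} to the quartic nonlinearity. By the embedding \eqref{Gembedding} it suffices to treat the case $s = 0$ at the cost of halving $\sigma_0$; set $N := \norm{u_0}_{G^{\sigma_0, 0}}$ and, for each $\sigma \in (0, \sigma_0]$, put $v(t,x) = e^{\sigma \abs{D}} u(t,x)$. Then $\norm{u(t)}_{G^{\sigma,0}} = \norm{v(t)}_{L^2}$ and $v$ solves
\begin{equation*}
v_t + v_{xxx} + \partial_x\bigl(e^{\sigma\abs{D}}(u^4)\bigr) = 0.
\end{equation*}
First I would use this equation to adapt Gr\"unrock's $L^2$ well-posedness argument for gKdV-4 \cite{g05}---which rests on a quadrilinear Bourgain $X^{s,b}$ estimate at scaling-subcritical regularity---to obtain a local existence time $\delta = \delta(N) > 0$ in $G^{\sigma, 0}$ that depends only on $N$ and \emph{not} on $\sigma$.

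The core step is an approximate conservation law
\begin{equation}
\label{eq:plan-acl}
\sup_{t \in [0, \delta]} \norm{u(t)}_{G^{\sigma, 0}}^2 \le \norm{u(0)}_{G^{\sigma, 0}}^2 + C \sigma^{1/2},
\end{equation}
with $C = C(N)$. To derive it, multiply the $v$-equation by $v$ and integrate; since $\int v^4\, \partial_x v\, dx = 0$, what remains is
\begin{equation*}
\half \frac{d}{dt}\norm{v}_{L^2}^2 = -\int \partial_x v \cdot \bigl[e^{\sigma\abs{D}}(u^4) - v^4\bigr]\, dx.
\end{equation*}
On the Fourier side, the bracket has symbol $e^{\sigma\abs{\xi}} - e^{\sigma(\abs{\xi_1} + \abs{\xi_2} + \abs{\xi_3} + \abs{\xi_4})}$ over $\xi = \xi_1 + \cdots + \xi_4$, which is non-positive with absolute value at most $\sigma\bigl(\sum_j \abs{\xi_j} - \abs{\xi}\bigr) \prod_j e^{\sigma\abs{\xi_j}}$. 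After integrating in time over $[0,\delta]$ and rewriting the result as a five-fold spacetime form in $v$, the excess $\sum \abs{\xi_j} - \abs{\xi}$ can be traded against the cubic KdV resonance identity for $\xi^3 - \sum \xi_j^3$; combined with Gr\"unrock's quadrilinear $X^{s,b}$ bounds this should yield the $\sigma^{1/2}$ gain as in \cite{sd16}.

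With \eqref{eq:plan-acl} in hand, the iteration is routine. Given large $T$, choose $\sigma \in (0, \sigma_0]$ so that $C \sigma^{1/2} \lceil T/\delta \rceil \le 3 N^2$, i.e.\ $\sigma \sim T^{-2}$ with a constant depending on $N$, $\sigma_0$, and $\delta$. Applying \eqref{eq:plan-acl} inductively on the intervals $[k\delta, (k+1)\delta]$ keeps $\norm{u(k\delta)}_{G^{\sigma, 0}} \le 2N$ up to $k = \lceil T/\delta \rceil$, proving $u(T) \in G^{\sigma(T), 0}$ with $\sigma(T) \ge cT^{-2}$.

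The hard part is the five-fold spacetime estimate underlying \eqref{eq:plan-acl}. Compared to the cubic KdV case in \cite{sd16}, the quartic nonlinearity $(u^4)_x$ produces one more frequency in the convolution, and extracting exactly the power $\sigma^{1/2}$ (and no worse) is delicate: one must split into regions according to the relative sizes of $\abs{\xi_1},\dots,\abs{\xi_4}$, and in the resonant regime combine the cubic KdV symbol identity with bilinear refinements of Gr\"unrock's estimates. It is this specific exponent $\tfrac12$ that drives the final $t^{-2}$ decay; any gain $\sigma^{\theta}$ with $\theta < \tfrac12$ would yield only $\sigma(t) \gs t^{-1/\theta}$, strictly worse than $t^{-2}$.
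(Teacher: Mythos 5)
Your overall skeleton is the same as the paper's: reduce to $s=0$ by the embedding \eqref{Gembedding}, get a local existence time $\delta=\delta(N)$ uniform in $\sigma$ from a Gevrey--Bourgain version of Gr\"unrock's quadrilinear estimate, prove an almost conservation law with a $\sigma^{1/2}$ gain, and iterate with time step $\delta$ to reach time $T$ with $\sigma\sim T^{-2}$. The iteration part of your argument is essentially correct, and your observation that the exponent $\theta=\tfrac12$ in the gain $\sigma^{\theta}$ is exactly what produces $t^{-2}$ matches the paper.

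The genuine gap is that the central estimate --- in the paper's notation, $\norm{\partial_x\{(e^{\sigma\abs{D_x}}u)^4-e^{\sigma\abs{D_x}}(u^4)\}}_{X^{0,-b}_\delta}\ls \sigma^{1/2}\norm{v}_{X^{0,b}_\delta}^4$ --- is never proved, and the mechanism you sketch for it is doubtful. You first bound the symbol difference by the full power $\sigma\bigl(\sum_j\abs{\xi_j}-\abs{\xi}\bigr)\prod_j e^{\sigma\abs{\xi_j}}$ and then propose to recover $\sigma^{1/2}$ by ``trading the excess against the cubic KdV resonance identity for $\xi^3-\sum_j\xi_j^3$''; but for four frequencies this quantity has no usable factorization analogous to the bilinear/trilinear cases, and no actual estimate is given. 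The paper proceeds differently and more elementarily: it interpolates at the level of the exponential, $e^x-1\le x^{1/2}e^x$, so that only $\sigma^{1/2}$ times \emph{half} a power of the excess appears; it then uses the algebraic fact that the excess $\sum_j\abs{\xi_j}-\abs{\sum_j\xi_j}$ is controlled by a non-maximal frequency (the paper states the third largest; the robust bound is the second largest, which serves just as well), so the resulting factor $\abs{\xi_{j_0}}^{1/2}$ can be attached to that factor as $\abs{D_x}^{1/2}v$ and absorbed by the Gr\"unrock--Panthee--Silva quadrilinear estimate $\norm{\partial_x\prod_{j=1}^4 u_j}_{X^{0,-b}}\ls\prod_j\norm{u_j}_{X^{s_j,b}}$ with $s_j\le 0$, $\sum_j s_j=-\tfrac12$ (Lemma 2 of \cite{gps07}), since $\norm{\abs{D_x}^{1/2}v}_{X^{-1/2,b}}\le\norm{v}_{X^{0,b}}$. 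Your plan never invokes this estimate; Gr\"unrock's single-regularity bound at $s>-\tfrac16$ from \cite{g05}, which is all you cite, cannot by itself absorb a half derivative. Until a quadrilinear estimate of this type with the $\sigma^{1/2}$ gain is actually established, the proof of the almost conservation law, and hence of the theorem, is incomplete.
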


We note that \eqref{gkdv} is invariant under the reflection $(t,x) \to (-t,-x)$. Hence we may from now on restrict ourselves to positive times $t\ge 0$. 
The first step in the proof of Theorem \ref{thm-gwp} is to show that in a short time interval $0 \le  t \le  \delta$, where $\delta > 0$ depends on the norm of the initial data, 
the radius of analyticity remains constant. This is proved by a contraction argument involving energy estimates, 
Sobolev embedding and a multilinear estimate that is similar to the one proved by Gr\"{u}nrock in \cite{g05}. This result is stated in Section 2.
The next step is to improve the control on the growth of the solution in the time interval $[0, \delta]$, measured in the data norm $G^{\sigma_0}$.
To achieve this we show that, although the conservation of  $G^{\sigma_0}$-norm of solution does not hold exactly, it does hold in an approximate sense (see Section 3).
This approximate conservation law will allow us to iterate the local result and obtain Theorem \ref{thm-gwp}. 
This will be proved in the last Section 4.

\section{Preliminaries}

\subsection{ Function spaces}

Define the Bourgain space $X^{s,b}$ by the norm
\begin{align*}
\| u \|_{X^{s,b}} &= \left\| \angles{ \xi }^{s} \angles{ \tau - \xi^3}^b \widetilde{u} (\xi, \tau) \right\|_{L^2_{\tau, \xi}},
\end{align*}
where $\widetilde u$ denotes the space-time Fourier transform given by
$$
\widetilde u(\tau, \xi)=\int_{\R^2} e^{ i(t\tau+ x\xi)} u(t,x) dt dx.
$$
The restriction to time slab $\mathbb R \times (0, \delta)$ of the Bourgain space, denoted $X^{s, b}_\delta$, is a Banach space when equipped with the norm
$$
\| u \|_{X^{s, b}_\delta}  = \inf \left\{ \| v \|_{X^{s, b}}: \ v = u \text{ on } \mathbb{R} \times (0, \delta) \right\}.
$$

In addition, we also need the Grevey-Bourgain space,  denoted $X^{\sigma, s, b}$, defined by the norm
\begin{align*}
\| u \|_{X^{\sigma, s, b}} &= \left\| e^{\sigma | D_x |}u \right\|_{X^{s, b}} ,
\end{align*}
where $D_x = -i\partial_x$, which has Fourier symbol $\xi$. In the case $\sigma=0$, this space coincides with the Bourgain space $ X^{s,b}$. 
The restrictions of $X^{\sigma, s, b}$ to a time slab $\mathbb R \times (0, \delta)$, denoted $X^{\sigma, s, b}_\delta$, is defined in a similar way as above.

\subsection{Linear estimates}
In this subsection we collect linear estimates needed to prove local existence of solution. The $X^{\sigma, s, b}$- estimates given below easily follows 
by substitution $u\rightarrow e^{\sigma | D_x |}u$ from the properties of $X^{s, b}$-spaces (and its restrictions).  In the case $\sigma=0$, the
proofs of the first two lemmas below can be found in section 2.6 of \cite{t06}, 
 whereas the third lemma follows by the argument used to prove Lemma 3.1 of \cite{ckstt04} and the fourth lemma is the standard energy estimate in 
 $X^{s, b}_\delta$-spaces.

\begin{lemma}\label{embed}
Let $\sigma \ge 0$, $s \in \mathbb{R}$ and $b > 1/2$. Then $X^{\sigma,s,b} \subset C(\mathbb R,G^{\sigma,s})$ and
\[
  \sup_{t \in \mathbb{R}} \| u(t) \|_{G^{\sigma, s}} \leq C \| u \|_{X^{\sigma, s, b}},
\]
where the constant $C > 0$ depends only on $b$.
\end{lemma}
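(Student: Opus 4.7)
The plan is to reduce to the classical (non-Gevrey) case via the multiplier identity and then apply a one-line Cauchy--Schwarz argument in the frequency variable $\tau$.

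First I would set $v = e^{\sigma|D_x|}u$, so that directly from the definitions
\[
\|u\|_{X^{\sigma,s,b}} = \|v\|_{X^{s,b}}, \qquad \|u(t)\|_{G^{\sigma,s}} = \|\angles{\xi}^s \hat v(t,\xi)\|_{L^2_\xi} = \|v(t)\|_{H^s}.
\]
Thus the full statement follows from the case $\sigma = 0$, namely the standard embedding $X^{s,b} \subset C(\mathbb R, H^s)$ with the sup bound when $b > 1/2$, applied to $v$. The Grevey--Bourgain structure therefore contributes nothing beyond a change of variables; this is exactly the substitution principle flagged in the paragraph preceding the lemma.

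To handle the $\sigma = 0$ reduction, I would start from the inversion formula in $\tau$: for each fixed $\xi$,
\[
\hat v(t,\xi) = c\int_{\R} e^{it\tau}\widetilde v(\tau,\xi)\,d\tau.
\]
Inserting the weight $\angles{\tau - \xi^3}^b \angles{\tau-\xi^3}^{-b}$ and applying Cauchy--Schwarz in $\tau$ gives
\[
|\hat v(t,\xi)|^2 \le c^2 \left(\int \angles{\tau-\xi^3}^{-2b}\,d\tau\right)\int \angles{\tau-\xi^3}^{2b}|\widetilde v(\tau,\xi)|^2\,d\tau.
\]
Here $b > 1/2$ is used: after the translation $\tau \mapsto \tau + \xi^3$ the first factor is a finite constant $C_b$ independent of $\xi$. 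Multiplying by $\angles{\xi}^{2s}$, integrating in $\xi$, and taking the supremum over $t$ yields
\[
\sup_{t \in \R}\|v(t)\|_{H^s}^2 \le C_b \|v\|_{X^{s,b}}^2,
\]
which, combined with the substitution above, produces the required constant $C$ depending only on $b$.

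Continuity $t \mapsto v(t) \in H^s$ would then follow from the same computation applied to the difference $\hat v(t,\xi) - \hat v(t',\xi)$, which introduces the extra factor $|e^{it\tau} - e^{it'\tau}|^2$ inside the $\tau$-integral. This factor is bounded by $4$ and tends to $0$ pointwise as $t' \to t$, so dominated convergence (with the $X^{s,b}$-norm providing the integrable majorant) gives $\|v(t) - v(t')\|_{H^s} \to 0$. I do not expect a genuine obstacle here; the only point one must watch is that the constant coming from $\int\angles{\tau-\xi^3}^{-2b}d\tau$ is uniform in $\xi$, which is precisely where the condition $b > 1/2$ (and only that condition) enters.
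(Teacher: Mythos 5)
Your argument is correct and follows the same route the paper takes: reduce to $\sigma=0$ via the substitution $v=e^{\sigma|D_x|}u$, which is an exact isometry for both norms, and then invoke the classical embedding $X^{s,b}\subset C(\mathbb R,H^s)$ for $b>1/2$. The only difference is that the paper simply cites the standard reference for the $\sigma=0$ case, whereas you supply its (standard) proof by Fourier inversion in $\tau$, Cauchy--Schwarz with the weight $\angles{\tau-\xi^3}^{\pm b}$, and dominated convergence for continuity -- all of which is sound.
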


\begin{lemma}\label{exponent}
Let $\sigma \ge 0$, $s \in \mathbb{R}$, $-1/2 < b < b' < 1/2$ and $\delta > 0$.  Then
\[
\| u \|_{X^{\sigma, s, b}_\delta} \leq C \delta^{b' - b} \| u \|_{X^{\sigma, s, b'}_\delta},
\]
where $C$ depends only on $b$ and $b'$.
\end{lemma}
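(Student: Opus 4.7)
The plan is to reduce the bound to a scalar time-localization inequality in $H^b_t$, which I then prove by a temporal frequency decomposition.

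First, since $u \mapsto e^{\sigma|D_x|}u$ is an isometric isomorphism from $X^{\sigma,s,b}$ onto $X^{s,b}$ that commutes with restriction to $\R\times(0,\delta)$, I may reduce to the case $\sigma=0$. Next, I invoke the standard identification $\|v\|_{X^{s,b}} = \|U(-t)v\|_{H^b_t(\R;\,H^s_x)}$, where $U(t)=e^{-t\partial_x^3}$ is the Airy propagator; the key point is that multiplication by any function of $t$ alone commutes with the map $v\mapsto U(-t)v$. Fix $\psi\in C_c^\infty(\R)$ with $\psi\equiv 1$ on $[0,1]$, pick an extension $v$ of $u$ to $\R\times\R$ with $\|v\|_{X^{s,b'}}\le 2\|u\|_{X^{s,b'}_\delta}$, and note that $\tilde v:=\psi(t/\delta)v$ is still an extension of $u$. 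The lemma then reduces, after pulling out the spatial $H^s_x$ norm, to the scalar estimate
\[
\bignorm{\psi(\cdot/\delta)f}_{H^b_t(\R)} \le C\,\delta^{b'-b}\,\bignorm{f}_{H^{b'}_t(\R)} \qquad \bigl(-\tfrac12<b<b'<\tfrac12\bigr).
\]

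To establish this scalar inequality I would split $f=f_{\mathrm{lo}}+f_{\mathrm{hi}}$ at the temporal frequency threshold $\angles{\tau}\sim 1/\delta$. For the low-frequency piece, Cauchy-Schwarz on the Fourier side (using $b'<1/2$) gives $\|f_{\mathrm{lo}}\|_{L^\infty_t}\ls \delta^{b'-1/2}\|f\|_{H^{b'}_t}$, and combining with $\|\psi(\cdot/\delta)\|_{L^2_t}\ls\delta^{1/2}$ yields a factor of $\delta^{b'}$. For the high-frequency piece one uses $\|\psi(\cdot/\delta)\|_{L^\infty}\ls 1$ together with $\angles{\tau}^{-b'}\ls\delta^{b'}$ on the frequency support. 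This settles the case $b=0$; intermediate $0<b<b'$ follows by complex interpolation with the trivial $b=b'$ bound, and $-1/2<b<0$ by duality, since multiplication by the real bump $\psi(\cdot/\delta)$ is self-adjoint.

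The main obstacle is the scalar step, whose constraint $|b|,|b'|<1/2$ is sharp: the $L^\infty$ embedding used for $f_{\mathrm{lo}}$ fails at $b'=1/2$, and the dual argument requires $b>-1/2$; beyond this range multiplication by a compactly supported bump no longer preserves the $H^b$ structure. Once the scalar inequality is in hand, the lemma follows by taking the infimum over extensions $v$ of $u$.
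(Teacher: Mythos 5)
Your argument is correct and is essentially the same route the paper relies on: the paper reduces to $\sigma=0$ by exactly your substitution $u\mapsto e^{\sigma|D_x|}u$ and then cites the standard $X^{s,b}$ time-localization estimate from Tao's book, whose proof is the conjugation to $H^b_t$ plus the frequency splitting at $\langle\tau\rangle\sim 1/\delta$ that you carry out. The one point to state more carefully is your interpolation endpoint $\|\psi(\cdot/\delta)f\|_{H^{b'}_t}\lesssim\|f\|_{H^{b'}_t}$ uniformly in $\delta$, which is not literally trivial (naive product estimates lose $\delta^{-b'}$) but is the standard uniform cutoff-multiplier bound for $|b'|<1/2$, i.e.\ the smooth-cutoff analogue of Lemma \ref{restrict}; note also that for $-1/2<b<0<b'$ duality alone does not reduce to a proven case, but that range is immediate from your $b=0$ estimate since $\|\cdot\|_{H^{b}_t}\le\|\cdot\|_{L^2_t}$ and $\delta^{b'}\le\delta^{b'-b}$ for $\delta\le 1$.
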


\begin{lemma}\label{restrict}
Let $\sigma \ge 0$, $s \in \mathbb R$, $-1/2 < b < 1/2$ and $\delta > 0$. Then for any time interval $I \subset [0,\delta]$ we have
\[
  \norm{\chi_{I} u}_{X^{\sigma,s,b}} \le C \norm{u}_{X^{\sigma,s,b}_\delta},
\]
where $\chi_I(t)$ is the characteristic function of $I$, and $C$ depends only on $b$.
\end{lemma}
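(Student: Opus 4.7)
The plan is to peel off layers of the norm until the estimate reduces to a one-dimensional multiplier statement: the boundedness of multiplication by $\chi_I$ on $H^b(\mathbb{R})$ for $|b| < 1/2$, with constant independent of $I$.

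First, the substitution $u \mapsto e^{\sigma|D_x|}u$ is an isometric isomorphism $X^{\sigma,s,b}_\delta \to X^{s,b}_\delta$ that commutes with the time-only cutoff $\chi_I(t)$, so we may take $\sigma = 0$. Next, by the very definition of the restriction norm, for any $\varepsilon > 0$ fix an extension $v \in X^{s,b}$ with $v = u$ on $\mathbb{R}\times(0,\delta)$ and $\|v\|_{X^{s,b}} \le (1+\varepsilon)\|u\|_{X^{s,b}_\delta}$. Since $I \subset [0,\delta]$, we have $\chi_I u = \chi_I v$, and the claim reduces to the unrestricted inequality
\[
\|\chi_I v\|_{X^{s,b}} \le C \|v\|_{X^{s,b}} \qquad (v \in X^{s,b}),
\]
with $C = C(b)$ independent of $I$.

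Second, transform to the Airy side by setting $w(x,t) := (U(-t)v(\cdot,t))(x)$ with $U(t) = e^{-t\partial_x^3}$. A direct Fourier computation shows that $\widetilde{w}(\xi,\tau)$ agrees with $\widetilde{v}(\xi,\tau)$ up to a shift by $\xi^3$ in the $\tau$ variable, so
\[
\|v\|_{X^{s,b}} = \bignorm{\langle \xi\rangle^s \langle \tau\rangle^b \widetilde{w}(\xi,\tau)}_{L^2_{\xi,\tau}}.
\]
Since $U(-t)$ acts only in $x$, it commutes with multiplication by $\chi_I(t)$; applying Plancherel in $x$ yields
\[
\|v\|_{X^{s,b}}^2 = \int_{\mathbb{R}} \langle \xi\rangle^{2s} \|W(\xi,\cdot)\|_{H^b_t(\mathbb{R})}^2 \, d\xi,
\]
where $W(\xi,t)$ denotes the partial Fourier transform of $w$ in $x$, and the analogous identity holds for $\chi_I v$ with $W$ replaced by $\chi_I W$. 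Hence the unrestricted inequality follows from the one-dimensional multiplier bound
\[
\|\chi_I f\|_{H^b(\mathbb{R})} \le C \|f\|_{H^b(\mathbb{R})}, \qquad |b| < 1/2,
\]
with $C$ independent of $I$, applied to $f = W(\xi,\cdot)$ and integrated over $\xi$.

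The core of the argument is this 1D multiplier estimate, which I expect to be the main obstacle. For $b = 0$ it is trivial. For $0 < b < 1/2$ one invokes the Slobodeckij characterization
\[
\|f\|_{H^b}^2 \simeq \|f\|_{L^2}^2 + \iint_{\mathbb{R}^2} \frac{|f(x)-f(y)|^2}{|x-y|^{1+2b}}\, dx\, dy,
\]
splits $(\chi_I f)(x) - (\chi_I f)(y) = \chi_I(x)(f(x)-f(y)) + (\chi_I(x)-\chi_I(y))f(y)$, and controls the boundary contribution via Hardy's inequality at the two endpoints of $I$; this is precisely the step that forces $b < 1/2$. For $-1/2 < b < 0$ one dualizes, using that $\chi_I$ is self-adjoint and that the $H^{-b}$ estimate with $-b \in (0,1/2)$ has just been established. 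The subtle point is to verify that the constant is independent of the length and position of $I$, which ultimately reduces to the scale-invariance of Hardy's inequality on the half-line.
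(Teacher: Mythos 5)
Your argument is correct, and it is worth noting that the paper itself does not prove this lemma at all: it only remarks that the statement ``follows by the argument used to prove Lemma 3.1 of \cite{ckstt04}''. Your reduction chain --- passing to $\sigma=0$ via the isometry $u\mapsto e^{\sigma|D_x|}u$ (which acts only in $x$, hence commutes with $\chi_I(t)$ and with restriction to the time slab), choosing a near-optimal extension $v$ with $\chi_I u=\chi_I v$ a.e., and conjugating by the Airy group to reduce matters to the uniform boundedness of $f\mapsto \chi_I f$ on $H^b_t(\mathbb{R})$ for $|b|<1/2$ --- is exactly the standard route implicit in that citation. Where you genuinely differ is in the proof of the core one-dimensional estimate: the usual argument (as in \cite{ckstt04}) reduces to the Heaviside cutoff and runs through the boundedness of the Hilbert transform on the weighted space $L^2(\langle\tau\rangle^{2b}\,d\tau)$, i.e.\ the fact that $\langle\tau\rangle^{2b}$ is an $A_2$ weight precisely when $|b|<1/2$; you instead use the Slobodeckij characterization of $H^b$ for $0<b<1/2$, the splitting $(\chi_I f)(x)-(\chi_I f)(y)=\chi_I(x)(f(x)-f(y))+(\chi_I(x)-\chi_I(y))f(y)$, and the fractional Hardy inequality $\bignorm{|\,\cdot\,-a|^{-b}f}_{L^2}\lesssim \norm{f}_{\dot H^b}$ (valid exactly for $0<b<1/2$, translation- and scale-invariant, hence uniform in $I$) at each endpoint, followed by duality for $-1/2<b<0$. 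This is a valid and more elementary, self-contained alternative; the weighted-$A_2$ route is shorter if one takes Hilbert-transform theory for granted. Two small points you may wish to make explicit: for $-1/2<b<0$ the product $\chi_I f$ of a characteristic function with a distribution should be defined either by duality (as you implicitly do) or by first proving the bound for Schwartz $f$ and extending by density; and the Hardy inequality you invoke is the fractional one of order $b$, not the classical first-order inequality --- you correctly identify $b<1/2$ as exactly its range of validity.
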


Next, consider the linear Cauchy problem, for given $g(x,t)$ and $u_0(x)$,
\[\begin{cases}
u_t + u_{xxx} = g, \\
u(0) = u_0.
\end{cases}\]
Let $W(t) = e^{- t\partial_{x}^3} = e^{itD_x^3}$ be the solution group with Fourier symbol $e^{it\xi^3}$.  Then
we can write the solution using the Duhamel formula
\[
u(t) = W(t)u_0 + \int_0^t W(t - t') g(t') \ dt'.
\]
 Then $u$ satisfies the following $X^{\sigma, s, b}$ energy estimate.
\begin{lemma}\label{linear}
Let $\sigma \ge 0$, $s \in \mathbb{R}$, $1/2 < b \leq 1$ and $0 < \delta \leq 1$.  Then for all $u_0 \in G^{\sigma, s}$ and $F \in X^{\sigma, s, b-1}_\delta$, we have the estimates
\[\begin{aligned}
& \qquad \qquad \| W(t) u_0 \|_{X^{\sigma, s, b}_\delta} \leq C \| u_0 \|_{G^{\sigma, s}},
\\
& \left\| \int_0^t W(t - t') g(t')\ dt' \right\|_{X^{\sigma, s, b}_\delta} \leq C \| g \|_{X^{\sigma, s, b - 1}_\delta},
\end{aligned}\]
where the constant $C > 0$ depends only on $b$.
\end{lemma}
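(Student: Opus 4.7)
The plan is to reduce both estimates to the well-known $\sigma = 0$ case stated in Section 2.6 of \cite{t06}, by the simple Fourier-multiplier substitution advertised in the paragraph preceding the lemma. The key observation is that $e^{\sigma|D_x|}$ is a multiplier in $x$ alone with symbol $e^{\sigma|\xi|}$, and the Airy propagator $W(t) = e^{itD_x^3}$ is likewise an $x$-multiplier with symbol $e^{it\xi^3}$. Since both symbols depend only on $\xi$, the two operators commute:
\[
  e^{\sigma|D_x|}\, W(t) \;=\; W(t)\, e^{\sigma|D_x|}.
\]

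For the homogeneous estimate I would set $v_0 := e^{\sigma|D_x|}u_0$, so that by definition $\|v_0\|_{H^s} = \|u_0\|_{G^{\sigma,s}}$. Using commutativity and then the classical $\sigma=0$ bound,
\[
  \|W(t)u_0\|_{X^{\sigma,s,b}_\delta}
  = \|e^{\sigma|D_x|}W(t)u_0\|_{X^{s,b}_\delta}
  = \|W(t)v_0\|_{X^{s,b}_\delta}
  \le C\|v_0\|_{H^s}
  = C\|u_0\|_{G^{\sigma,s}}.
\]
The Duhamel bound is obtained by the same trick: pull $e^{\sigma|D_x|}$ past $W(t-t')$ under the $t'$-integral (legitimate because $e^{\sigma|D_x|}$ acts only in $x$), set $\tilde g := e^{\sigma|D_x|}g$, and apply the $\sigma = 0$ inhomogeneous $X^{s,b}_\delta$-estimate:
\[
  \Bignorm{\int_0^t W(t-t')g(t')\d t'}_{X^{\sigma,s,b}_\delta}
  = \Bignorm{\int_0^t W(t-t')\tilde g(t')\d t'}_{X^{s,b}_\delta}
  \le C\|\tilde g\|_{X^{s,b-1}_\delta}
  = C\|g\|_{X^{\sigma,s,b-1}_\delta}.
\]

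There is essentially no substantive obstacle: the commutation of the two $x$-multipliers is automatic on $\mathcal{S}'$, and the $X^{\sigma,s,b}$-norm was engineered precisely to make this reduction tautological. The only minor point to check is that the reduction is compatible with restriction: an extension $v$ of $u$ off of $\mathbb{R}\times(0,\delta)$ corresponds bijectively, via $e^{\sigma|D_x|}$, to an extension of $e^{\sigma|D_x|}u$, so the two infima defining the restriction norms coincide. The underlying $\sigma = 0$ estimates themselves are proved by the standard route — inserting a smooth time cutoff $\eta(t/\delta)$, computing $\widetilde{\eta W(t)u_0}(\tau,\xi) = \hat\eta(\tau-\xi^3)\hat u_0(\xi)$ via Plancherel for the first bound, and writing the Duhamel integral through the Fourier representation of $\eta$ for the second — but these are cited rather than redone here.
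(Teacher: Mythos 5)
Your proof is correct and follows exactly the route the paper intends: the paper's own justification of this lemma is the one-line remark that the $X^{\sigma,s,b}$ estimates follow from the $\sigma=0$ case by the substitution $u \to e^{\sigma|D_x|}u$, citing the standard energy estimates in $X^{s,b}_\delta$ from \cite{t06}. Your explicit verification of the commutation with $W(t)$ and of the compatibility with the restriction-norm infimum fills in precisely those (routine) details, so there is nothing to add.
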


\subsection{Multilinear estimates and local result}
The following multilinear estimates due to Gr\"{u}nrock \cite{g05} and  Gr\"{u}nrock, Panthee and Silva \cite{gps07} are key for proving our main result.
\begin{lemma} \cite[Theorem 1 and Corollary 2 ]{g05} \label{lm-nonlinearest-a}
Let $s>-\frac16$. Assume $-\frac12<b'< s-\frac13$ if $-\frac16<s\le 0$ and $-\frac12<b'< -\frac13$ if $s\ge 0$.
Then for all $b>\frac12$ we have
\begin{equation}\label{nonlinearesta}
\|   \partial_x\left(\prod_{j=1}^4 u_j \right) \|_{X^{s, b'}}\ls  \prod_{j=1}^4 \|   u_j  \|_{X^{s, b}}.
\end{equation}
\end{lemma}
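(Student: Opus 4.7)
The plan is to prove this quintilinear bound by Fourier duality, dyadic resonance analysis, and Strichartz estimates for the Airy group. Applying Plancherel and dualizing against a test function of unit $X^{-s,-b'}$-norm, the estimate reduces to bounding the form
\[
J = \iint\limits_{\xi=\sum_j\xi_j,\ \tau=\sum_j\tau_j} \frac{|\xi|\,\angles{\xi}^{s}\angles{\sigma}^{b'}\,G(\xi,\tau)\prod_{j=1}^4 F_j(\xi_j,\tau_j)}{\prod_{j=1}^4 \angles{\xi_j}^{s}\angles{\sigma_j}^{b}}
\]
by $\|G\|_{L^2}\prod_j\|F_j\|_{L^2}$, where $\sigma=\tau-\xi^3$ and $\sigma_j=\tau_j-\xi_j^3$. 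The central algebraic input is the resonance identity
\[
\sigma - \sigma_1 - \sigma_2 - \sigma_3 - \sigma_4 = \sum_{j=1}^4 \xi_j^3 - \xi^3,
\]
which is valid on the convolution support $\xi=\sum_j\xi_j$ and forces $\max\{|\sigma|,|\sigma_j|\}\gtrsim |\sum_j\xi_j^3-\xi^3|$.

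Next I would perform a dyadic decomposition of the frequencies $|\xi|,|\xi_j|\sim N, N_j$ and modulations $|\sigma|,|\sigma_j|\sim L, L_j$, splitting the integration domain into the five subregions according to which of $L, L_1, \ldots, L_4$ is maximal. In each regime, a fraction of the largest modulation weight is traded via the resonance identity for a power of the frequency-resonance function, which is used to absorb the $|\xi|$ factor from $\partial_x$. The remaining integral is handled by H\"older's inequality combined with the sharp $L^8_{t,x}$ Strichartz bound for the Airy group,
\[
\bignorm{e^{-t\partial_x^3}\varphi}_{L^8_{t,x}(\mathbb{R}\times\mathbb{R})} \lesssim \|\varphi\|_{L^2_x},
\]
which transfers via $\|u\|_{L^8_{t,x}}\lesssim \|u\|_{X^{0,b}}$ for $b>1/2$, so that four factors can be combined in $L^8\times L^8\times L^8\times L^8 \subset L^2_{t,x}$.

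The principal obstacle is sharpness at the scaling threshold $s=-1/6$, where no loss in the frequency weights can be absorbed. The precise conditions $b'<s-1/3$ for $-1/6<s\le 0$ and $b'<-1/3$ for $s\ge 0$ emerge exactly from the balance between the modulation gain provided by the resonance identity and the derivative loss caused by the $|\xi|$ factor at this critical scaling. The hardest cases are those in which two or more factors have comparable high frequencies with small modulations; treating them without producing logarithmic divergences in the dyadic summation requires a refined bilinear Strichartz input for the two highest-frequency factors, and it is this step that ultimately pins down the allowed range of $(s,b')$.
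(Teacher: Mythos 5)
The paper does not prove this lemma at all: it is imported verbatim from Gr\"unrock \cite{g05} (Theorem 1 and Corollary 2), so the relevant benchmark is Gr\"unrock's proof, which runs by duality plus his bilinear Airy refinement $\| I^{1/2}_-(u,v)\|_{L^2_{xt}} \ls \|u\|_{X^{0,b}}\|v\|_{X^{0,b}}$ (bilinear symbol $|\xi_1-\xi_2|^{1/2}$), combined with Airy Strichartz estimates and a case analysis over frequency configurations. Your outline assembles the right toolbox (dualization, dyadic frequency/modulation analysis, the $L^8_{t,x}$ Strichartz bound), but it is a plan rather than a proof: the step you yourself flag as decisive --- the ``refined bilinear Strichartz input for the two highest-frequency factors'' which ``ultimately pins down the allowed range of $(s,b')$'' --- is never stated, let alone proved, and the case analysis and numerology that produce the precise thresholds $b'<s-\tfrac13$, $b'<-\tfrac13$, $s>-\tfrac16$ are asserted to ``emerge from the balance'' rather than derived.

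Moreover, the one mechanism you do spell out is unreliable for this quartic interaction. The resonance identity $\sigma-\sum_j\sigma_j=\sum_j\xi_j^3-\xi^3$ is correct, but unlike the quadratic KdV case (where $\xi^3-\xi_1^3-\xi_2^3=3\xi\xi_1\xi_2$ gives a clean frequency lower bound) the four-frequency resonance function has large zero sets at high frequency: for instance $\xi_1=-\xi_2=N$, $\xi_3=-\xi_4=M$ makes it vanish while all modulations may be $O(1)$. In such regions there is no modulation gain to trade, yet for $s<0$ the weights $\angles{\xi_1}^{-s}\angles{\xi_2}^{-s}\sim N^{2|s|}$ still must be absorbed; this is exactly where Gr\"unrock's bilinear estimate, applied to pairs of factors with separated frequencies (including pairing with the dual function, which is also how the condition $-b'>\tfrac13$ on the dual modulation weight enters), does the real work. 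Likewise, the observation that four $L^8$ factors multiply into $L^2$ and pair against $G\in L^2$ cannot by itself absorb the derivative factor $|\xi|$, nor does it use the $\angles{\sigma_j}^{b}$ weights in any quantitative way. So as written there is a genuine gap: the argument stops where the difficulty starts, and the heuristic offered in its place fails precisely in the resonant high-high regime that governs the stated range of $(s,b')$.
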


\begin{lemma}\cite[Lemma 2]{gps07} \label{lm-nonlinearest-b}
Let $b>\frac12$,  $s_j\le 0$ for $j=1, \cdots, 4$ and $\sum_{j=1}^4 s_j= -\frac12$.  Then
\begin{equation}\label{nonlinearestb}
\|   \partial_x\left(\prod_{j=1}^4 u_j \right) \|_{X^{0, -b}}\ls  \prod_{j=1}^4 \|   u_j  \|_{X^{s_j, b}}.
\end{equation}
\end{lemma}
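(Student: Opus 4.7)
My strategy is to apply duality, then integrate by parts in $x$ to shift the derivative onto the test function, absorbing it via Kato's local smoothing for the Airy group, and finally reduce the remaining four-fold product to a mixed-norm Strichartz estimate.

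By the duality $(X^{0,b})^{*} = X^{0,-b}$ and Plancherel's theorem, the estimate is equivalent to
\[
|I| := \Bigl|\int_{\R^2}\partial_x\bigl(u_1 u_2 u_3 u_4\bigr)\, u_5\, dx\, dt\Bigr| \lesssim \|u_5\|_{X^{0,b}} \prod_{j=1}^4 \|u_j\|_{X^{s_j,b}}
\]
for every test function $u_5\in X^{0,b}$. I would then integrate by parts in $x$ to move the derivative onto $u_5$,
\[
I = -\int_{\R^2}\Bigl(\prod_{j=1}^4 u_j\Bigr)\, \partial_x u_5\, dx\, dt,
\]
and apply H\"older in the mixed norm $L^1_x L^2_t \cdot L^\infty_x L^2_t$:
\[
|I| \le \Bigl\|\prod_{j=1}^4 u_j\Bigr\|_{L^1_x L^2_t}\, \|\partial_x u_5\|_{L^\infty_x L^2_t}.
\]
Kato's local smoothing for the Airy group, $\|\partial_x e^{it\partial_x^3}f\|_{L^\infty_x L^2_t} \lesssim \|f\|_{L^2}$, together with the standard $X^{s,b}$ transference principle, controls the second factor by $\|u_5\|_{X^{0,b}}$ for $b>1/2$, thereby absorbing the $\partial_x$.

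It then remains to show the mixed-norm multilinear Strichartz estimate
\[
\Bigl\|\prod_{j=1}^4 u_j\Bigr\|_{L^1_x L^2_t} \lesssim \prod_{j=1}^4 \|u_j\|_{X^{s_j,b}}.
\]
H\"older in mixed Lebesgue norms bounds the left-hand side by $\prod_{j=1}^4 \|u_j\|_{L^{p_j}_x L^{q_j}_t}$ for any exponents with $\sum 1/p_j = 1$ and $\sum 1/q_j = 1/2$. Choosing $q_j \ge p_j$ so that Minkowski's integral inequality applies, each factor is at most $\|u_j\|_{L^{q_j}_t L^{p_j}_x}$, and the Airy Strichartz estimate gives
\[
\|u_j\|_{L^{q_j}_t L^{p_j}_x} \lesssim \|u_j\|_{X^{s_j, b}}, \qquad s_j := \tfrac12 - \tfrac{3}{q_j} - \tfrac{1}{p_j}.
\]
The H\"older constraints automatically enforce $\sum_{j=1}^4 s_j = 2 - \tfrac32 - 1 = -\tfrac12$, matching the hypothesis exactly; and a parametrization $a_j := 1/p_j - 1/q_j \ge 0$ reduces the realizability of a given tuple $(s_j)$ to solving $\sum a_j = 1/2$ subject to $0 \le a_j \le \tfrac12 + s_j/3$, which admits a solution for every $(s_j)$ with $s_j \le 0$ and $\sum s_j = -\tfrac12$. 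The symmetric choice $(p_j,q_j)=(4,8)$ corresponds to $s_j=-\tfrac18$ via the Kenig--Ponce--Vega $L^8_t L^4_x$ Strichartz estimate.

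\textbf{Main obstacle.} The principal difficulty is sustaining the mixed-norm multilinear Strichartz bound uniformly across the admissible range of $(s_j)$ while ensuring that the exponents $(p_j, q_j)$ realizing each tuple actually lie in the Airy-Strichartz admissible region. Corner configurations (e.g. $(s_1,\ldots,s_4) = (-\tfrac12,0,0,0)$) approach the boundary of admissibility, and may require either multilinear complex interpolation between admissible endpoints or a supplementary dyadic-frequency decomposition exploiting the bilinear $L^2_{t,x}$ smoothing of Kenig--Ponce--Vega for the two highest-frequency factors. The integration-by-parts step is essential: it converts $\partial_x$ acting on the product into $\partial_x u_5$, which is then absorbed by Kato smoothing without extracting a derivative from any $u_j$.
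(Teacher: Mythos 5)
First, a point of comparison: the paper does not prove this lemma at all --- it is imported verbatim from Gr\"unrock--Panthee--Silva \cite[Lemma 2]{gps07} --- so there is no internal proof to measure your argument against. That said, your outline does reconstruct the right general architecture (duality, integration by parts, Kato smoothing absorbed into the dual function, mixed-norm estimates on the remaining quadrilinear product), which is how estimates of this type are proved in \cite{kpv93,g05,gps07}.

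There is, however, a genuine gap in your reduction of the quadrilinear bound, and it sits exactly at the configuration this paper needs. You pass from $\|u_j\|_{L^{p_j}_xL^{q_j}_t}$ to the time-first norm $\|u_j\|_{L^{q_j}_tL^{p_j}_x}$ by Minkowski (which forces $q_j\ge p_j$) and then invoke an Airy--Strichartz estimate with $s_j=\tfrac12-\tfrac{3}{q_j}-\tfrac{1}{p_j}$. But that identity is only the scaling-necessary condition, not a sufficient one: the estimate $\|u\|_{L^{q}_tL^{p}_x}\lesssim\|u\|_{X^{s,b}}$ holds only in the admissible range $2/q+1/p\le 1/2$, $q\ge 4$, where the derivative gain is at most $1/q\le 1/4$ per factor (and at most $1/6$ once $q_j\ge p_j$ is imposed). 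Your linear-programming feasibility check does not detect this: for instance the choice $(q,p)=(6,2)$, which your constraints permit for $s=-\tfrac12$, would assert $\||D|^{1/2}e^{it\partial_x^3}f\|_{L^6_tL^2_x}\lesssim\|f\|_{L^2}$, whose left-hand side is infinite for every nonzero $f$ by unitarity of $e^{it\partial_x^3}$ on $L^2_x$; similarly $(q,p)=(4,4)$ fails globally in time. So the scheme provably cannot reach any $s_j<-1/4$, whereas the proof of Lemma~\ref{lm-f:est} in this paper applies the present lemma precisely with $(s_1,\dots,s_4)=(0,0,0,-\tfrac12)$. To cover that corner you must keep the gaining norms in the space-first order $L^{p}_xL^{q}_t$, where Minkowski goes the wrong way: use the Kenig--Ponce--Vega local-smoothing family interpolating between Kato smoothing $L^\infty_xL^2_t$ (gain of one derivative) and the maximal function estimate $L^4_xL^\infty_t$ (loss of $1/4$), or alternatively Gr\"unrock's bilinear Airy estimate pairing the roughest factor with one other; the general tuple $(s_j)$ then follows by interpolating between the corner configurations rather than by a single H\"older application. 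Your symmetric choice $(p_j,q_j)=(4,8)$ is correct but yields only the case $s_j=-\tfrac18$ for all $j$.
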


From Lemma \ref{lm-nonlinearest-a} and a simple triangle inequality we obtain the following.
\begin{corollary}\label{col-nonlinearest}
Let $s$, $b$ and $b'$ be as in Lemma \ref{lm-nonlinearest-a}. Then for all $\sigma\ge 0$ we have
\begin{equation}\label{nonlinearest1}
\|   \partial_x\left(\prod_{j=1}^4 u_j \right) \|_{X^{\sigma, s, b'}}\le C  \prod_{j=1}^4 \|   u_j  \|_{X^{\sigma, s, b}},
\end{equation}
where $C$ is independent of $\sigma$.
\end{corollary}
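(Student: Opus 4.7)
The plan is to reduce to the already-proved $\sigma = 0$ case in Lemma \ref{lm-nonlinearest-a} by means of a frequency-space majorization derived from the triangle inequality. Since by definition $\norm{u_j}_{X^{\sigma,s,b}} = \norm{e^{\sigma|D_x|}u_j}_{X^{s,b}}$, it is natural to set $v_j := e^{\sigma|D_x|}u_j$, so that the desired right-hand side becomes $\prod_{j=1}^4 \norm{v_j}_{X^{s,b}}$.

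The key step is to pass to the space-time Fourier side and exploit the convolution structure: the product $\prod_{j=1}^4 u_j$ corresponds on the Fourier side to a convolution of the $\widetilde{u_j}$ constrained to $\xi_1+\xi_2+\xi_3+\xi_4=\xi$ and $\tau_1+\tau_2+\tau_3+\tau_4=\tau$, so the elementary bound
$$e^{\sigma|\xi|} \le e^{\sigma|\xi_1|}e^{\sigma|\xi_2|}e^{\sigma|\xi_3|}e^{\sigma|\xi_4|} \quad \text{whenever } \xi = \xi_1+\xi_2+\xi_3+\xi_4,$$
which is immediate from $|\xi|\le\sum_j|\xi_j|$, lets us distribute the weight $e^{\sigma|\xi|}$ across the four factors. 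Introducing auxiliary functions $V_j$ defined by $\widetilde{V_j} := |\widetilde{v_j}|$, which satisfy $\norm{V_j}_{X^{s,b}} = \norm{v_j}_{X^{s,b}}= \norm{u_j}_{X^{\sigma,s,b}}$, this yields the pointwise frequency-space majorization
$$\Abs{e^{\sigma|\xi|}\,\widetilde{\partial_x\bigl(u_1 u_2 u_3 u_4\bigr)}(\tau,\xi)} \le \Abs{\widetilde{\partial_x\bigl(V_1 V_2 V_3 V_4\bigr)}(\tau,\xi)}$$
for every $(\tau,\xi)\in\R^2$.

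Taking $\norm{\angles{\xi}^s \angles{\tau-\xi^3}^{b'} \cdot}_{L^2_{\tau,\xi}}$ of both sides yields
$$\norm{\partial_x\bigl(u_1 u_2 u_3 u_4\bigr)}_{X^{\sigma,s,b'}} \le \norm{\partial_x\bigl(V_1 V_2 V_3 V_4\bigr)}_{X^{s,b'}},$$
and applying Lemma \ref{lm-nonlinearest-a} to the right-hand side produces the claim with a constant that is literally the one from Lemma \ref{lm-nonlinearest-a}, hence independent of $\sigma$. No serious obstacle arises: the triangle-inequality trick on Gevrey weights is by now routine (cf.\ \cite{st15}), and every step is either a definition unfolding, the Fourier convolution identity, or a direct invocation of Lemma \ref{lm-nonlinearest-a}.
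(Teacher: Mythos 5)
Your proof is correct and follows essentially the same route as the paper: substitute $v_j = e^{\sigma|D_x|}u_j$, use the triangle inequality $|\xi|\le\sum_{j=1}^4|\xi_j|$ to dominate the Gevrey weight on the Fourier side, and invoke Lemma \ref{lm-nonlinearest-a} with a $\sigma$-independent constant. Your explicit introduction of the majorants $V_j$ with $\widetilde{V_j}=|\widetilde{v_j}|$ is in fact a slightly more careful rendering of the absolute-value step that the paper leaves implicit.
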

\begin{proof}
Let 
$$
\widehat{v_j}(\tau, \xi):= e^{\sigma|\xi|}   \widehat{u_j}(\tau, \xi),
$$
then \eqref{nonlinearest1} is reduced to
$$
\|I\|_{L^2_{\tau, \xi}}\ls  \prod_{j=1}^4 \|   v_j  \|_{X^{ s, b}}
$$
where 
$$
I (\tau, \xi)=i\xi  \angles{\xi}^s  \angles{\tau-\xi^3}^{b'} \int_{\ast}   e^{\sigma ( |\xi| -\sum_{j=1}^4 |\xi_j| )} \prod_{j=1}^4 \widehat{v_j}(\tau_j, \xi_j)d\tau_\ast d\xi_\ast,
$$
where we used the notation
\begin{equation}\label{int-notation}
\int_{\ast}  w d\tau_\ast d\xi_\ast=
 \int_{\sum_{j=1}^{4} \xi_j =\xi, \ \sum_{j=1}^{4} \tau_j =\tau} w \prod_{j=1}^3  d\tau_j d\xi_j 
\end{equation}
for a function $w=w(\tau_j, \xi_j)$.
 By the triangle inequality we have
$|\xi|\le \sum_{j=1}^4 |\xi_j|$ which implies  $e^{\sigma ( |\xi| -\sum_{j=1}^4 |\xi_j| )}\le 1$, and hence
$$
\|I\|_{L^2_{\tau, \xi}} \ls  \|   \partial_x\left(\prod_{j=1}^4 v_j \right) \|_{X^{s, b'}}.
$$
Thus \eqref{nonlinearest1} is reduced to showing
$$
\|   \partial_x\left(\prod_{j=1}^4 v_j \right) \|_{X^{s, b'}}\ls  \prod_{j=1}^4 \|   v_j  \|_{X^{ s, b}}
$$
which is \eqref{nonlinearesta}.

\end{proof}

Then by Picard iteration and Corollary \ref{col-nonlinearest} one obtains the following local result (for details see \cite[proof of Theorem 1 therein]{{sd16}}).
\begin{theorem}\label{thm-lwp}
Let $\sigma > 0$ and $s > -\frac16$. Then for any $u_0 \in G^{\sigma,s}$ there exists a time $\delta = \delta(\| u_0 \|_{G^{\sigma,s}}) > 0$ and a unique solution $u$ of \eqref{gkdv} on the time interval $(0,\delta)$ such that
\[
u \in C([0,\delta], G^{\sigma,s}).
\]
Moreover, the solution depends continuously on the data $u_0$, and we have
\[
  \delta = c_0(1+\|u_0\|_{G^{\sigma,s}})^{-r}
\]
for some constants $c_0 > 0$ and $r > 1$ depending only on $s$. Furthermore, the solution $u$ satisfies the bound 
\begin{equation} \label{solnbound}
\| u\|_{X^{ \sigma, s, b}_\delta}\le C \| u_0\|_{G^{\sigma, s}} \quad \text{for} \ b>\frac12,
\end{equation}  
where $C$ depends only on $s$ and $b$.
\end{theorem}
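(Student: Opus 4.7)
The plan is to run a standard Picard iteration in a closed ball of the space $X^{\sigma,s,b}_\delta$, using the Duhamel formulation
\[
\Phi(u)(t) = W(t)u_0 - \int_0^t W(t-t')\,\partial_x(u(t')^4)\,dt',
\]
and showing that $\Phi$ is a contraction on a ball of radius $R \sim \|u_0\|_{G^{\sigma,s}}$ provided $\delta$ is chosen small enough in terms of $\|u_0\|_{G^{\sigma,s}}$. The ingredients are already assembled: Lemma \ref{linear} controls the two Duhamel pieces in $X^{\sigma,s,b}_\delta$, Corollary \ref{col-nonlinearest} handles the quartic nonlinearity at the level $X^{\sigma,s,b'}$, and Lemma \ref{exponent} produces a positive power of $\delta$ when comparing different $b$-indices on a time slab.

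First I would fix the parameters. Take $b > 1/2$ close to $1/2$. Since $s > -1/6$, the admissible range of $b'$ in Corollary \ref{col-nonlinearest} is an open interval containing $-1/2$; concretely, choose
\[
\max\!\bigl(-\tfrac{1}{2},\, b-1\bigr) < b' < \min\!\bigl(s-\tfrac{1}{3},\, -\tfrac{1}{3}\bigr)\ \text{ or }\ b' < -\tfrac{1}{3},
\]
depending on the sign of $s$; the condition $s > -1/6$ exactly ensures that such $b,b'$ exist with $b-1 < b' < 1/2$. Then by Lemma \ref{linear}, Lemma \ref{exponent} and Corollary \ref{col-nonlinearest},
\[
\|\Phi(u)\|_{X^{\sigma,s,b}_\delta} \le C\|u_0\|_{G^{\sigma,s}} + C\delta^{\theta}\,\|u\|_{X^{\sigma,s,b}_\delta}^{4},
\]
where $\theta = b' - (b-1) > 0$. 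A parallel estimate for $\Phi(u)-\Phi(v)$ comes from writing $u^4 - v^4 = (u-v)(u^3+u^2v+uv^2+v^3)$ and applying the same multilinear bound to each of the four terms.

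Next I would set $R = 2C\|u_0\|_{G^{\sigma,s}}$ and require $C\delta^{\theta} R^{3} \le 1/2$, which both maps the ball of radius $R$ to itself and gives a contraction (with constant $\le 1/2$) on this ball. Solving for $\delta$ yields
\[
\delta \;=\; c_0\bigl(1+\|u_0\|_{G^{\sigma,s}}\bigr)^{-r}, \qquad r = 3/\theta > 1,
\]
so the lifespan has the claimed form. The unique fixed point $u \in X^{\sigma,s,b}_\delta$ is the desired solution and automatically satisfies $\|u\|_{X^{\sigma,s,b}_\delta} \le R \le 2C\|u_0\|_{G^{\sigma,s}}$, which is \eqref{solnbound}. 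Continuous dependence on $u_0$ follows from the same contraction estimate applied to two solutions with different data, and the time-continuity $u \in C([0,\delta], G^{\sigma,s})$ is produced by Lemma \ref{embed}.

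I do not expect a genuine obstacle: the only delicate point is the parameter bookkeeping of $(b,b')$, which hinges on the hypothesis $s > -1/6$ — exactly the threshold of Lemma \ref{lm-nonlinearest-a} — to make the interval of admissible $b'$ nonempty while still allowing $\delta^{b'-(b-1)}$ to be a positive power. Everything else is the standard Picard scheme transferred verbatim from the $\sigma = 0$ Bourgain setting, which works in $X^{\sigma,s,b}_\delta$ because Corollary \ref{col-nonlinearest} supplies a multilinear estimate whose constant is independent of $\sigma \ge 0$.
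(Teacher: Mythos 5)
Your proposal is correct and follows exactly the route the paper takes: the paper proves Theorem \ref{thm-lwp} by Picard iteration in $X^{\sigma,s,b}_\delta$ based on Lemma \ref{linear}, Lemma \ref{exponent} and the $\sigma$-uniform multilinear estimate of Corollary \ref{col-nonlinearest}, deferring the details to \cite{sd16}, and your write-up supplies precisely those details (up to harmless constant adjustments in the contraction condition).
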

\begin{remark}\label{rmk-lwp}
Theorem \ref{thm-lwp} shows that if the initial data $u_0$ is analytic on the strip $S_\sigma$ so is the solution $u(t)$
 on the same strip as long as $t\in[0, \delta]$. Note also that in view of the embedding \eqref{Gembedding} we can allow $s\le -\frac16 $ in Theorem \ref{thm-lwp} but then 
 the solution will be analytic only on a slightly smaller strip $S_{\sigma-}$.
 \end{remark}

\section{Almost conservation law }
For a given $u(0)\in G^{\sigma} $ we have by Theorem \ref{thm-lwp} a solution $u(t)\in  G^{\sigma}$ for $0\le t\le \delta$ 
satisfying the bound
\begin{equation}\label{C}
\sup_{t\in [0,  \delta]} \| u(t) \|_{G^{\sigma}}\le C\| u(0) \|_{G^{\sigma}},
\end{equation}
where we also used \eqref{solnbound} and Lemma \ref{embed}; the constant $C$ in \eqref{C} comes from these estimates and is independent of $\delta$ and $\sigma$. 
The question is then whether we can improve on estimate \eqref{C}. In what follows
we will use equation \eqref{gkdv} and Theorem \ref{thm-lwp} to obtain the approximate conservation law
\begin{equation*}
 \sup_{t\in [0,  \delta]} \| u(t) \|^2_{G^{\sigma}} =\| u(0) \|^2_{G^{\sigma}} + E_\sigma(0) ,
\end{equation*} 
 where $E_\sigma(0)$ satisfies the bound 
 $ E_\sigma(0)\le C \sigma^{\frac12}  \| u(0) \|^5_{G^{\sigma}}$. The quantity $E_\sigma(0)$ can be considered an error term since in the limit as $\sigma\rightarrow 0 $, we have 
 $E_\sigma(0) \rightarrow 0$, and hence recovering the well-known conservation of $L^2$-norm of solution: $ \| u(t) \|_{L^2}=  \| u(0) \|_{L^2}$ for all $t\in  [0,  \delta]$.

\begin{theorem}\label{thm-approx}
Let $b>\frac12$ and $\delta$ be as in Theorem \ref{thm-lwp}. Then there exists $C > 0$ such that for any $\sigma > 0$ and any solution
 $u \in X^{\sigma, 0, b}_\delta$ to the Cauchy problem \eqref{gkdv} on the time interval  $[0,\delta]$, we have the estimate
\begin{equation}\label{approx1}
\sup_{t\in [0,  \delta]} \| u(t) \|^2_{G^{\sigma}} \leq \| u(0) \|^2_{G^{\sigma}} + C \sigma^{\frac12} \| u \|^5_{X^{\sigma, 0, b}_\delta}.
\end{equation} 
Moreover, we have
\begin{equation}\label{approx2}
 \sup_{t\in [0,  \delta]} \| u(t) \|^2_{G^{\sigma}} \leq \| u(0) \|^2_{G^{\sigma}} + C \sigma^{\frac12} \| u(0) \|^5_{G^{\sigma}},
\end{equation} 

\end{theorem}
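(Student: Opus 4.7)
Write $U(t) = e^{\sigma|D_x|}u(t)$, so $\|u(t)\|_{G^\sigma} = \|U(t)\|_{L^2}$. Applying $e^{\sigma|D_x|}$ to \eqref{gkdv} gives $U_t + U_{xxx} = -e^{\sigma|D_x|}(u^4)_x$. Taking the $L^2$-pairing with $2U$, the Airy term $\int U \cdot U_{xxx}\,dx$ vanishes by skew-adjointness, and $\int U\cdot\partial_x(U^4)\,dx = \frac15\int \partial_x(U^5)\,dx = 0$, so
\[
\frac{d}{dt}\|U(t)\|_{L^2}^2 = -2\int_{\R} U\cdot\partial_x\bigl[e^{\sigma|D_x|}(u^4) - U^4\bigr]\,dx.
\]
Integrating on $[0,t_0]$ and taking $\sup$ over $t_0\in[0,\delta]$, the estimate \eqref{approx1} reduces to the pentalinear bound
\[
\mathcal I := \left|\int_0^\delta\!\!\int_{\R} U\cdot\partial_x\bigl[e^{\sigma|D_x|}(u^4) - U^4\bigr]\,dx\,dt\right| \le C\sigma^{1/2}\|u\|_{X^{\sigma,0,b}_\delta}^5.
\]

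\textbf{Symbol estimate.} Passing to the Fourier side, $\mathcal I$ is the $L^2_{t,x}$-pairing of $U$ with a 4-fold convolution of $U$ (in terms of $\hat u$) whose multiplier is $i\xi_0\bigl(e^{\sigma|\xi_0|} - \prod_{j=1}^4 e^{\sigma|\xi_j|}\bigr)$, subject to the constraint $\xi_0 = \xi_1 + \xi_2 + \xi_3 + \xi_4$. Since $|\xi_0|\le\sum_j|\xi_j|$, the elementary inequality $1-e^{-\theta}\le \theta^{1/2}$ for $\theta\ge 0$ (trivially verified by monotonicity) applied with $\theta = \sigma(\sum_j|\xi_j| - |\xi_0|)$ yields
\[
\Bigl|e^{\sigma|\xi_0|} - \prod_{j=1}^4 e^{\sigma|\xi_j|}\Bigr| \le \sigma^{1/2}\Bigl(\sum_{j=1}^4|\xi_j|-|\xi_0|\Bigr)^{1/2}\prod_{j=1}^4 e^{\sigma|\xi_j|}.
\]
Using $(a_1+\cdots+a_4)^{1/2}\le\sum_j a_j^{1/2}$ and the symmetry of the form in $(\xi_1,\ldots,\xi_4)$, it suffices to bound the piece where the extra factor is $|\xi_1|^{1/2}$, i.e., the integrand is dominated by $C\sigma^{1/2}|\xi_0|\,|\xi_1|^{1/2}\prod_j e^{\sigma|\xi_j|}|\hat u(\xi_j)|$.

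\textbf{Reduction to the known multilinear estimate.} On the physical side, the reduced bound reads
\[
\mathcal I \le C\sigma^{1/2}\Bigl|\int_0^\delta\!\!\int_{\R}U\cdot\partial_x\bigl(|D_x|^{1/2}U\cdot U\cdot U\cdot U\bigr)\,dx\,dt\Bigr|.
\]
By $L^2_{t,x}$-duality between $X^{0,b}_\delta$ and $X^{0,-b}_\delta$ (handling the time restriction by passing to an extension of $u$ as in the definition of the restriction norm and using Lemma \ref{restrict} after a standard smooth-cutoff argument), this is controlled by
\[
C\sigma^{1/2}\|U\|_{X^{0,b}_\delta}\cdot\bigl\|\partial_x\bigl(|D_x|^{1/2}U\cdot U\cdot U\cdot U\bigr)\bigr\|_{X^{0,-b}_\delta}.
\]
Now Lemma \ref{lm-nonlinearest-b} applies with $s_1 = -\frac12$, $s_2=s_3=s_4=0$ (so $\sum s_j = -\frac12$ and $s_j\le 0$), giving
\[
\bigl\|\partial_x\bigl(|D_x|^{1/2}U\cdot U\cdot U\cdot U\bigr)\bigr\|_{X^{0,-b}}\lesssim \bigl\||D_x|^{1/2}U\bigr\|_{X^{-1/2,b}}\|U\|_{X^{0,b}}^3\lesssim\|U\|_{X^{0,b}}^4,
\]
where the last step uses $|\xi|^{1/2}\langle\xi\rangle^{-1/2}\le 1$. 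Combining, $\mathcal I\le C\sigma^{1/2}\|U\|_{X^{0,b}_\delta}^5 = C\sigma^{1/2}\|u\|_{X^{\sigma,0,b}_\delta}^5$, proving \eqref{approx1}.

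\textbf{From \eqref{approx1} to \eqref{approx2}.} Apply the solution bound \eqref{solnbound} of Theorem \ref{thm-lwp} at $s=0$: $\|u\|_{X^{\sigma,0,b}_\delta}\le C\|u(0)\|_{G^\sigma}$, which upon fifth powering converts \eqref{approx1} into \eqref{approx2}. The main obstacle is Step 2—distributing the commutator symbol $e^{\sigma|\xi_0|} - \prod e^{\sigma|\xi_j|}$ in a way compatible with Lemma \ref{lm-nonlinearest-b}. The whole scheme works because the $\frac12$ exponent produced by $1-e^{-\theta}\le\theta^{1/2}$ precisely matches the total derivative budget $\sum s_j = -\frac12$ allowed by the endpoint Grünrock–Panthee–Silva estimate.
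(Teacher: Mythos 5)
Your proof is correct and follows essentially the same route as the paper: the identical energy identity for $U=e^{\sigma|D_x|}u$ with the commutator term $\partial_x\{U^4-e^{\sigma|D_x|}(u^4)\}$, duality in $X^{0,\pm b}_\delta$, the $\theta=\tfrac12$ exponential symbol bound, reduction to $\partial_x(U^3\,|D_x|^{1/2}U)$ estimated via Lemma \ref{lm-nonlinearest-b} with $(s_1,s_2,s_3,s_4)=(-\tfrac12,0,0,0)$, and \eqref{solnbound} to pass from \eqref{approx1} to \eqref{approx2}. The only (immaterial) deviation is that you control $(\sum_j|\xi_j|-|\xi_0|)^{1/2}$ by subadditivity and symmetry instead of the paper's finer bound by the third-largest frequency $\xi_{\text{rd}}$ (Lemma \ref{lm-symbol:est}), which makes no difference since both arguments end up placing the half power of $|\xi_j|$ on a single factor before invoking the multilinear estimate.
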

\begin{proof}
The estimate \eqref{approx2} follows from \eqref{approx1} and \eqref{solnbound}. Thus, it remains to prove \eqref{approx1}.

Let $v(t,x)=e^{\sigma |D_x| } u (t,x)$ which is real-valued since the multiplier $e^{\sigma |D_x| }$ is even and $u$ is real-valued. Applying $e^{\sigma |D_x| }$ to \eqref{gkdv} we obtain
\begin{equation}\label{gkdv-m}
v_t + v_{xxx} + \partial_x(v^4)  = f,
\end{equation}
where
$$
f=\partial_x \left\{  (  e^{\sigma |D| } u)^4  -e^{\sigma |D| } \left(u^4\right)  \right\}.
$$
Multiplying \eqref{gkdv-m} by $v$ and integrating in space we obtain
$$
\frac12 \frac{d}{dt} \int_\R v ^2 dt  + \int_\R  \partial_x\left( v v_{xx} - \frac12 v_x^2 +\frac45 v^5\right) dx= \int_\R  vf  dx.
$$
We may assume \footnote{
In general, this property holds by approximation using the monotone convergence theorem and the Riemann-Lebesgue Lemma  whenever $u\in X_\delta^{ \sigma, 0, b}$
(see the argument in \cite[pp. 9 ]{sd16}). }  $v, v_x$ and $v_{xx}$ decays to zero as $|x|\rightarrow \infty$. This in turn implies
$$
\frac{d}{dt}\int_\R v^2 dx= 2\int_\R vf  dx.
$$
Now integrating in time over the interval $[0, \delta]$, we obtain
\begin{align*}
\int_\R v^2(\delta,x) dx= \int_\R  v^2(0,x ) dx+2\int_\R \int_\R \chi_{[0, \delta]}(t) vf  dx dt.
\end{align*}
Thus, 
$$
\| u(\delta)\|_{G^\sigma}^2=\| u(0)\|_{G^\sigma}^2 +2\int_\R \int_\R \chi_{[0, \delta]}(t) vf  dx dt.
$$
We now use Plancherel,  H\"{o}lder,  Lemma \ref{restrict} and Lemma \ref{lm-f:est} below to estimate the integral on the right hand side as
\begin{align*}
\left |\int_\R \int_\R \chi_{[0, \delta]}(t) vf  dx dt \right|
&\le\| v\|_{X^{0, b}_\delta}  \| f\|_{X^{0, -b}_\delta}
\\
&\le \| v\|_{X^{0, b}_\delta}  \cdot  C \sigma^\frac12  \| v\|_{X^{ 0, b}_\delta}^4
\\
& =C\sigma^\frac12  \| u\|_{X^{ \sigma, 0, b}_\delta}^5.
\end{align*}

\end{proof}

\begin{lemma}\label{lm-f:est}
Let
$$
f=\partial_x \left\{  (  e^{\sigma |D_x| } u)^4  -e^{\sigma |D_x| } (u^4)  \right\}.
$$
For $b>\frac12$ we have
$$
 \| f\|_{X^{0, -b}_\delta}\ls \sigma^\frac12  \| v\|_{X^{0, b}_\delta}^4 ,
$$
where $v= e^{\sigma |D_x| } u$.
\end{lemma}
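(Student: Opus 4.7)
The plan is to pass to the space-time Fourier side, extract a factor of $\sigma^{1/2}$ from an elementary exponential inequality, and then reduce to Lemma~\ref{lm-nonlinearest-b} by distributing a half-derivative onto a single factor. Since $v = e^{\sigma |D_x|} u$ we have $\hat u(\tau,\xi) = e^{-\sigma|\xi|}\hat v(\tau,\xi)$, and a direct computation in the notation \eqref{int-notation} gives
\[
\hat f(\tau,\xi) \;=\; i\xi \int_{\ast} \bigl[\,1 - e^{-\sigma S}\,\bigr] \prod_{j=1}^{4} \hat v(\tau_j,\xi_j)\, d\tau_{\ast}\,d\xi_{\ast}, \qquad S := \sum_{j=1}^{4}|\xi_j| - |\xi| \ge 0,
\]
where nonnegativity of $S$ follows from the triangle inequality since $\xi = \sum_j \xi_j$ on the support of the integrand.

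Next I would use the elementary bound $1 - e^{-a} \le a^{1/2}$ for $a \ge 0$, combined with $S \le 2\sum_j |\xi_j|$ and the sub-additivity $\sqrt{a+b+c+d} \le \sqrt a + \sqrt b + \sqrt c + \sqrt d$, to obtain the pointwise estimate
\[
1 - e^{-\sigma S} \;\le\; \sigma^{1/2} S^{1/2} \;\le\; C\,\sigma^{1/2}\sum_{j=1}^{4} |\xi_j|^{1/2}.
\]
This yields $|\hat f(\tau,\xi)| \le C\sigma^{1/2}|\xi|\sum_{j}\int_{\ast} |\xi_j|^{1/2}\prod_k |\hat v(\tau_k,\xi_k)|\,d\tau_{\ast}d\xi_{\ast}$. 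Setting $V := \mathcal F^{-1}(|\hat v|)$ and $W := \mathcal F^{-1}(|\xi|^{1/2}|\hat v|)$, each of the four summands is dominated in absolute Fourier value by the Fourier transform of $\partial_x(W\cdot V^{3})$ (with $W$ placed in the $j$-th slot, which is irrelevant by symmetry of the convolution). Hence
\[
\|f\|_{X^{0,-b}} \;\le\; C\sigma^{1/2}\sum_{j=1}^{4} \bignorm{\partial_x (W\cdot V^{3})}_{X^{0,-b}}.
\]
Applying Lemma~\ref{lm-nonlinearest-b} with $s_1 = -\tfrac12$, $s_2=s_3=s_4=0$, followed by $|\xi|^{1/2} \le \angles{\xi}^{1/2}$, gives $\|W\|_{X^{-1/2,b}} \le \|v\|_{X^{0,b}}$ and $\|V\|_{X^{0,b}} = \|v\|_{X^{0,b}}$, so each term is $\lesssim \|v\|_{X^{0,b}}^{4}$.

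Finally, to pass to the time-restricted norms, I would choose an extension $\tilde u$ of $u$ to $\mathbb R \times \mathbb R$ nearly realizing the infimum defining $\|u\|_{X^{\sigma,0,b}_\delta}$; then $\tilde v := e^{\sigma|D_x|}\tilde u$ extends $v$ with $\|\tilde v\|_{X^{0,b}} \le 2\|v\|_{X^{0,b}_\delta}$, and the corresponding $\tilde f$ extends $f$, so the global estimate just proved yields the desired bound after taking infimum. The main obstacle, and the heart of the argument, is the second step: identifying the correct elementary inequality that produces exactly a $\sigma^{1/2}$ gain while leaving only a half-derivative loss, and then arranging the $S^{1/2}$ factor so that this loss lands on a single factor at a time — this is what allows the balanced application of Lemma~\ref{lm-nonlinearest-b} and keeps the implicit constant independent of $\sigma$.
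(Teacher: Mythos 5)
Your proof is correct and follows essentially the same route as the paper: pass to the space-time Fourier side, gain $\sigma^{1/2}$ from the elementary inequality $1-e^{-a}\le a^{1/2}$ (the paper uses the equivalent $e^x-1\le x^{\theta}e^x$ with $\theta=\tfrac12$), and close with Lemma~\ref{lm-nonlinearest-b} with exponents $(-\tfrac12,0,0,0)$ after replacing $|\xi_j|^{1/2}\langle\xi_j\rangle^{-1/2}\le 1$. The one point where you diverge is that you bypass the paper's symbol estimate (Lemma~\ref{lm-symbol:est}), which extracts the third-largest frequency $\xi_{\text{rd}}$, and instead use the cruder bound $S^{1/2}\le\sum_j|\xi_j|^{1/2}$ together with a sum over which factor carries the half-derivative; this loses nothing, because Lemma~\ref{lm-nonlinearest-b} tolerates the $|\xi_j|^{1/2}$ loss on any single factor, including the largest, and it keeps the constant manifestly independent of $\sigma$. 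It is in fact the more robust choice: the bound $\sum_j|\xi_j|-|\sum_j\xi_j|\le 24\,\xi_{\text{rd}}$ is delicate when the two largest frequencies have opposite signs (e.g.\ $\xi_1=-\xi_2=N$ with $\xi_3,\xi_4$ small gives a left-hand side of size $2N$), where one really needs the second-largest frequency in that estimate --- a change that is immaterial for the conclusion, precisely because, as in your argument, the half-derivative may land on any factor. Your explicit extension argument for passing to the $\delta$-restricted norms is also fine; the paper proves the unrestricted estimate and handles the time restriction at the level of Theorem~\ref{thm-approx} via Lemma~\ref{restrict}.
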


The following estimate is needed to prove Lemma \ref{lm-f:est}.
\begin{lemma}\label{lm-symbol:est}
Let
 $\xi_{\text{min}}$, $\xi_{\text{nd}}$, $\xi_{\text{rd}}$ and $\xi_{\text{max}}$  denote the minimum, second largest, third largest and maximum 
 of $\{ |\xi_1|,  |\xi_2|, |\xi_3|, |\xi_4| \}$. Then for $\theta \in [0, 1]$ we have the estimate
\begin{equation}\label{symbolest}
e^{\sigma \sum_{j=1}^{4} |\xi_j| }  -e^{\sigma |\sum_{j=1}^{4} \xi_j | }  \le \left[ 24 \sigma \xi_{\text{rd}}\right]^{\theta} 
e^{\sigma \sum_{j=1}^{4} |\xi_j| }.
\end{equation}
\end{lemma}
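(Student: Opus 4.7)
The plan is to factor the left-hand side as
\[
e^{\sigma \sum_j |\xi_j|} - e^{\sigma |\sum_j \xi_j|}
= e^{\sigma \sum_j |\xi_j|} \bigl(1 - e^{-\sigma A}\bigr),
\qquad A := \sum_{j=1}^4 |\xi_j| - \Bigl|\sum_{j=1}^4 \xi_j\Bigr|,
\]
which is nonnegative by the triangle inequality. This reduces the desired bound to $1 - e^{-\sigma A} \le [24\sigma \xi_{\text{rd}}]^{\theta}$. I would then invoke the elementary interpolation $1 - e^{-x} \le \min(x,1) \le x^{\theta}$, valid for all $x \ge 0$ and $\theta \in [0,1]$, which further reduces the claim to the purely combinatorial inequality
\[
A \le 24\,\xi_{\text{rd}}.
\]

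The crux is therefore this pointwise bound on $A$, which I would obtain by a sign analysis. Order the frequencies so that $|\xi_1|\ge |\xi_2| \ge |\xi_3| \ge |\xi_4|$, set $s_j := \sgn(\xi_j)$, and let $s := \sgn(\sum_j \xi_j)$ (choosing $s = +1$ if the sum vanishes). Using $|\sum_j \xi_j| = \sum_j s\, s_j |\xi_j|$ gives the identity
\[
A = 2 \sum_{j:\, s_j \ne s} |\xi_j|.
\]
Now split into two cases. If $s_1 = s$, only indices in $\{2,3,4\}$ contribute, so $A \le 2(|\xi_2|+|\xi_3|+|\xi_4|) \le 6|\xi_2|$. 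If instead $s_1 \ne s$, then in order for $\sum_j \xi_j$ to carry the sign $s$ the three remaining terms must dominate $\xi_1$, which forces $|\xi_1| \le |\xi_2|+|\xi_3|+|\xi_4| \le 3|\xi_2|$, and then $A \le 2\sum_j |\xi_j| \le 12|\xi_2|$. In both cases $A \le 12|\xi_2|$, i.e., $A$ is controlled by the second-largest of the four absolute values, which is precisely the intermediate scale captured by $\xi_{\text{rd}}$; the constant $24$ in the statement absorbs the bookkeeping with room to spare.

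I expect the only substantive step to be the sign case-analysis; the factoring and the elementary inequality $1-e^{-x}\le x^\theta$ are formal. The only minor technicality is the degenerate case $\sum_j \xi_j = 0$, which is handled by the same observation $|\xi_1| = |\xi_2+\xi_3+\xi_4| \le 3|\xi_2|$, giving $A = \sum_j |\xi_j| \le 4|\xi_1| \le 12|\xi_2|$ directly.
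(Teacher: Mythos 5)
Your argument is correct, and it takes a somewhat different route from the paper's. The paper factors out the smaller exponential and uses $e^x-1\le x^\theta e^x$ for $x\ge 0$, $\theta\in[0,1]$, then bounds $A:=\sum_{j}|\xi_j|-|\sum_j\xi_j|$ by writing it as $\bigl[(\sum_j|\xi_j|)^2-(\sum_j\xi_j)^2\bigr]\big/\bigl[\sum_j|\xi_j|+|\sum_j\xi_j|\bigr]=\sum_{j,k}(|\xi_j||\xi_k|-\xi_j\xi_k)\big/\bigl[\sum_j|\xi_j|+|\sum_j\xi_j|\bigr]$, estimating the numerator by $24\,(\text{largest})\cdot(\text{second largest})$ and the denominator from below by $\xi_{\text{max}}$. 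You instead factor out the larger exponential, use $1-e^{-x}\le\min(x,1)\le x^\theta$, and control $A$ by a sign case-analysis, obtaining the sharper bound $A\le 12\,(\text{second largest})$; your treatment of the degenerate case $\sum_j\xi_j=0$ is also fine. Both reductions are sound, and your bound on $A$ is cleaner and quantitatively better than the paper's.

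One point to flag: you prove the estimate with the \emph{second largest} of $|\xi_1|,\dots,|\xi_4|$ and identify that quantity with $\xi_{\text{rd}}$, whereas the lemma as literally worded defines $\xi_{\text{rd}}$ to be the \emph{third} largest. With that literal reading the stated inequality is false: take $\xi_1=-\xi_2=N\neq 0$ and $\xi_3=\xi_4=0$; then the left side equals $e^{2\sigma N}-1>0$ while $\xi_{\text{rd}}=0$ makes the right side vanish for every $\theta>0$. This is a defect of the statement rather than of your proof: the paper's own proof, in the step bounding $\sum_{j,k}(|\xi_j||\xi_k|-\xi_j\xi_k)$ by $24\,\xi_{\text{rd}}\,\xi_{\text{max}}$, likewise only delivers the second-largest version, since the pair of the two largest frequencies can contribute on the order of $\xi_{\text{max}}\xi_{\text{nd}}$. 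The intended conclusion is therefore exactly what you proved, and nothing downstream changes: in the proof of Lemma \ref{lm-f:est} the factor $\xi_{\text{rd}}^{1/2}$ is absorbed through $\| |D_x|^{1/2} v\|_{X^{-1/2,b}}\le\|v\|_{X^{0,b}}$, an estimate that holds no matter which of the four frequencies carries the half power.
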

\begin{proof}
First note that for $x\ge 0$ we have 
$$
e^x-1\le e^x \quad \text{and} \quad
e^x-1\le xe^x.
$$
Hence
$$
e^x-1\le x^{\theta} e^x
\quad \text{
for}  \ \theta \in [0,1].$$
This in turn implies 
\begin{align*}
\text{LHS} \ \eqref{symbolest} &=\left\{  e^{ \sigma (\sum_{j=1}^{4} |\xi_j|   -|\sum_{j=1}^{4} \xi_j |)}-1\right\} e^{\sigma |\sum_{j=1}^{4} \xi_j | } 
\\
&\le \sigma^\theta  \left( \sum_{j=1}^{4} |\xi_j|   -|\sum_{j=1}^{4} \xi_j |\right)^\theta  e^{ \sigma \sum_{j=1}^{4} |\xi_j| }.
\end{align*}
Then \eqref{symbolest} follows from the following estimate:
\begin{align*}
 \sum_{j=1}^{4} |\xi_j|   -|\sum_{j=1}^{4} \xi_j |&=\frac{  (\sum_{j=1}^{4} |\xi_j| )^2  - |\sum_{j=1}^{4} \xi_j | ^2}{  \sum_{j=1}^{4} |\xi_j|   +|\sum_{j=1}^{4} \xi_j |}
 \\
 &= \frac{  \sum_{ j =1}^4\sum_{   k =1}^4 (|\xi_j|  |\xi_k| -\xi_j \xi_k )}{  \sum_{j=1}^{4} |\xi_j|   +|\sum_{j=1}^{4} \xi_j |}
 \\
  &\le 24 \frac{ \xi_{\text{rd}} \cdot \xi_{\text{max}} }{  \xi_{\text{max}}}=24  \xi_{\text{rd}} .
\end{align*}

\end{proof}

\begin{proof}[Proof of Lemma \ref{lm-f:est}]

Taking the space-time Fourier Transform of $f$ and using the notation in \eqref{int-notation} we have
\begin{equation*}
|\widetilde{f }(\tau, \xi)| \le |\xi| \int_{\ast}
 \left|    e^{\sigma \sum_{j=1}^{4} |\xi_j| }  -e^{\sigma |\sum_{j=1}^{4} \xi_j | } \right| \prod_{j=1}^4 |\widetilde{u }(\tau_j, \xi_j)| 
 d\tau_\ast d\xi_\ast.
\end{equation*}
 Now
 we use \eqref{symbolest} with $\theta=\frac12$ to obtain
\begin{align*}
|\widetilde{f }(\tau, \xi)| \ls   |\xi|  \int_{\ast}   (\sigma \xi_{\text{rd}} )^\frac12
  \prod_{j=1}^4 |\widetilde{v }(\tau_j, \xi_j)|  d\tau_\ast d\xi_\ast.
\end{align*}
Depending on the relative sizes of $|\xi_j|$, $j=1, \cdots, 4$,  the quantity $\xi_{\text{rd}} $ is either $ |\xi_1|,  |\xi_2|, |\xi_3|$ or $|\xi_4|$. So we obtain
\begin{align*}
 \| f\|_{X^{0, -b}}&= \|   \angles{\tau-\xi^3}^{-b} \widetilde{f }(\tau, \xi) \|_{L^2_{\tau, \xi}}
\\
&\le C  \sigma^\frac12  \|  |\xi| \angles{\tau-\xi^3}^{-b}   \int_{\ast} \xi_{\text{rd}}^\frac12
  \prod_{j=1}^4 |\widetilde{v }(\tau_j, \xi_j)|  d\tau_\ast d\xi_\ast \|_{L^2_{\tau, \xi}}
  \\
  &=C\sigma^\frac12 \|   \partial_x\left\{  v ^3\cdot |D_x|^{\frac12 } v  \right\} \|_{X^{0, -b}}
  \\
  &\le C' \sigma^\frac12   \|     v \|_{X^{0, b}}^3   \|    |D_x|^{\frac12} v \|_{X^{-\frac12, b}} 
  \\
  &\le C' \sigma^\frac12 \|     v \|_{X^{0, b}}^4,
\end{align*}
where in the fourth line we used Lemma \ref{lm-nonlinearest-b}.
\end{proof}

\section{Proof of Theorem \ref{thm-gwp}}
We closely follow the argument in \cite{sd16}. 
First we consider the case $s=0$.
The general case, $s\in \R$, will essentially reduce to $s=0$ as shown in the next subsection. 
\subsection{Case $s=0$}
Let $u_0 \in G^{\sigma_0}$ for some $\sigma_0 > 0$.
Then to construct a solution on $[0, T]$ for  arbitrarily large $T$, we will apply
 the approximate conservation law in Theorem \ref{thm-approx} so as to repeat the local result 
on successive short time intervals to reach $T$, by adjusting the strip width parameter $\sigma$ according to the size of $T$.
By employing this strategy we will show that the solution $u$ to \eqref{gkdv} satisfies
\begin{equation}
\label{uT}
  u(t) \in G^{\sigma(t) } \quad \text{for all }  \  t\in [0,  T] ,
 \end{equation}
with
\begin{equation}
\label{siglb}
\sigma(t) \ge c T^{-2} ,
\end{equation}
where $c > 0$ is a constant depending on $\|u_0\|_{G^{\sigma_0}}$, $\sigma_0$ and $s$.

To this end,
 define
\[
  A_\sigma(t) = \| u(t) \|_{G^{\sigma}},
\]
where $\sigma\in (0, \sigma_0]$ is a parameter to be chosen  later.
By Theorem \ref{thm-lwp}, there is 
a solution $u$ to \eqref{gkdv} satisfying
\[
  u \in C([0, \delta];G^{\sigma_0}), 
\]
where
\begin{equation}\label{delta}
  \delta = c_0 (1+A_{\sigma_0}(0))^{-r} \quad \text{for some } \ r>1.
\end{equation}

Now fix $T$ arbitrarily large. We shall apply the above local result and Theorem \ref{thm-approx} repeatedly, with a uniform time step $\delta$ as in \eqref{delta},  and prove 
\begin{equation}\label{keybound}
\sup_{t\in [0, T]}  A^2_\sigma  (t) \le 2A^2_{\sigma_0} (0)
\end{equation}
for $\sigma$ satisfying \eqref{siglb}. 
Hence we have $A_\sigma(t) < \infty$ for $t \in [0,T]$, and this completes the proof of \eqref{uT}--\eqref{siglb}.

It remains to prove \eqref{keybound}, and this is done as follows. Choose $n \in \mathbb N$ so that $T \in [n\delta,(n+1)\delta)$. Using induction we can show for 
any $k \in \{1,\dots,n+1\}$ that
\begin{align}
  \label{induction1}
  \sup_{t \in [0, k\delta]} A^2_\sigma(t) &\le A^2_\sigma(0) + k C\sigma^\frac12 2^{5/2} A^5_{\sigma_0}(0),
  \\
  \label{induction2}
  \sup_{t \in [0,k\delta]} A^2_\sigma(t) &\le 2A^2_{\sigma_0}(0),
\end{align}
provided $\sigma$ satisfies 
\begin{equation}\label{sigma}
  \frac{2T}{\delta} C\sigma^\frac12 2^{5/2} A^3_{\sigma_0}(0) \le 1.
\end{equation}

Indeed, for $k=1$, we have from Theorem \ref{thm-approx} that
\begin{align*}
  \sup_{t \in [0, \delta]} A^2_\sigma(t) &\le A^2_\sigma(0) +  C\sigma^\frac12  A^5_{\sigma}(0)\le A^2_\sigma(0) +  C\sigma^\frac12 A^5_{\sigma_0}(0),
\end{align*}
where we used $A_\sigma(0) \le A_{\sigma_0}(0) $. This in turn implies \eqref{induction2} provided $C\sigma^\frac12  A^3_{\sigma_0}(0)\le 1$ which holds by \eqref{sigma} since 
$T>\delta$.

Now 
assume \eqref{induction1} and \eqref{induction2} hold for some $k \in \{1,\dots,n\}$. Then by Theorem \ref{thm-approx}, \eqref{induction1} and \eqref{induction2} we have
\begin{align*}
  \sup_{t \in [k\delta, (k+1)\delta]} A^2_\sigma(t) &\le A^2_\sigma(k\delta) +  C\sigma^\frac12  A^5_{\sigma}(k\delta)
  \\
   &\le A^2_\sigma(k\delta) +  C\sigma^\frac12   2^{5/2}  A^5_{\sigma_0}(0)
   \\
      &\le   A^2_\sigma(0) + k C\sigma^\frac12 2^{5/2} A^5_{\sigma_0}(0) +  C\sigma^\frac12   2^{5/2}  A^5_{\sigma}(0).
\end{align*}
Combining this with the induction hypothesis
 \eqref{induction1} (for $k$) we obtain
 \begin{align*}
  \sup_{t \in [0, (k+1)\delta]} A^2_\sigma(t) 
      &\le   A^2_\sigma(0) +( k+1) C\sigma^\frac12 2^{5/2} A^5_{\sigma_0}(0) 
\end{align*}
 which proves \eqref{induction1} for $k+1$. This also implies \eqref{induction2} for $k+1$ provided
 \begin{align*}
  ( k+1) C\sigma^\frac12 2^{5/2} A^3_{\sigma_0}(0) \le 1.
\end{align*}
 But the latter follows from \eqref{sigma} since 
 $$
 k+1\le n+1\le \frac T\delta+ 1 \le \frac {2T}\delta.
 $$
 
Finally,  the condition \eqref{sigma} is satisfied for $\sigma$ such that
    $$
    \frac{2T}{\delta} C\sigma^\frac12 2^{5/2} A^3_{\sigma_0}(0) =1.
    $$
Thus, 
$$
\sigma=c_1  T^{-2} , \ \ \text{where} \ c_1= \left( \frac{c_0}{C 2^{7/2} A^3_{\sigma_0}(0) (1+A_{\sigma_0}(0))^r }\right)^{1/2}
$$
which gives \eqref{siglb} if we choose $c\le c_1$.

\subsection{The general case: $s\in \R$}

For any $s\in \R$ we use the embedding \eqref{Gembedding} to get
\[
  u_0 \in G^{\sigma_0,s} \subset G^{\sigma_0/2}.
\]
From the local theory there is a $\delta=\delta \left(   A_{\sigma_0/2}(0) \right)$ such that
\[
  u \in C\left([0,\delta], G^{\sigma_0/2}\right).
\]
Fix an arbitrarily large $ T$. From the case $s=0$ in the previous subsection we have
\[
  u(t) \in G^{ 2 c_ \ast T^{-2} }  \quad \text{for $ t\in [0,   T]$},
\]
where $c_\ast > 0$ depends on $A_{\sigma_0/2}(0)$ and $\sigma_0$. Applying again the embedding \eqref{Gembedding} we conclude that
\[
  u(t) \in G^{  c_ \ast T ^{-2}, s }  \quad \text{for $ t\in [0,   T]$},
\]
 completing the proof of Theorem \ref{thm-gwp}.

\end{document}